\documentclass[11pt]{article}

\usepackage[margin=1in]{geometry}
\usepackage{amsmath,amssymb,amsthm,mathtools,mathrsfs,graphtex3.41}
\usepackage{hyperref}
\usepackage{enumitem}
\usepackage{graphicx}

\newtheorem{theorem}{Theorem}[section]
\newtheorem{definition}[theorem]{Definition}

\newtheorem{proposition}[theorem]{Proposition}

\newcommand\Cy[1] {\ensuremath \mathscr C_{#1}}
\newcommand\myvertex[5]{\sub\pic\xunit3pt \yunit3pt \Align[c] (\tiny$#1$) (0,2) \Align[c] (\tiny$#2$) (1.9,0.62) \Align[c] (\tiny$#3$) (1.18,-1.62) \Align[c] (\tiny$#4$) (-1.18,-1.62) \Align[c] (\tiny$#5$) (-1.9,0.62) \cip} 

\let\ds=\displaystyle

\title{The Pentagon Graph Operator}
\author{Severino V.~Gervacio${}^{0000-0001-6036-5835}$\\
De La Salle University\\
0922 Manila, Philippines\\
~\\
Hiroshi Maehara\\
Ryukyu University\\
903-0213 Okinawa, Japan\\
~\\
Phoebe Chloe F.~Ramos\\
Ilocos Sur Polytechnic State College\\
2714 Ilocos Sur, Philippines}

\date{}

\begin{document}
\maketitle

\begin{abstract}
For a graph $G$, let $\Cy5(G)$ denote the graph whose vertices are the induced $5$-cycles of $G$, where two vertices are adjacent whenever the corresponding cycles share an edge. We investigate the iterative behavior of the pentagon graph operator $\Cy5$, positioning it as the natural continuation of the quadrangle graph operator and the broader induced-cycle graph operator program. We construct explicit pentagon-vanishing, pentagon-periodic, and pentagon-expanding graphs. In particular, the dodecahedron and the icosahedron provide natural periodic examples, while an icosahedral tadpole-hat construction yields expanding families. Our main result proves that every graph is exactly one of three types with respect to $\Cy5$: vanishing, periodic, or expanding. The paper suggests a broader theory for the operators $C_k$ generated by induced cycles of fixed length $k$.

\medskip
\noindent\textbf{Keywords:} graph operator, induced cycle, pentagon graph, periodic graph, expanding graph

\noindent\textbf{MSC (2020):} 05C62
\end{abstract}

\section{Introduction}
Graph operators transform graphs into new graphs while preserving isomorphism. Classical examples include the line graph operator, the triangle graph operator, and the cycle graph operator. A recent development in this direction is the quadrangle graph operator, which associates to each graph the edge-intersection graph of its induced $4$-cycles.

The present paper is the natural successor to the recent study of the quadrangle graph operator, extending the induced-cycle edge-intersection framework from $4$-cycles to $5$-cycles. While both operators satisfy the same vanishing--periodic--expanding trichotomy, the pentagon case introduces new geometric phenomena arising from the dodecahedron and icosahedron.

Our main objective is to establish the complete trichotomy theorem for the pentagon graph operator. We also develop explicit constructions witnessing all three possible behaviors under iteration.

\section{The pentagon graph operator}

\begin{definition}
Let $G$ be a graph. The \emph{pentagon graph} of $G$, denoted by $\Cy5(G)$, is the graph whose vertices are the induced cycles of length $5$ in $G$. Two vertices of $\Cy5(G)$ are adjacent if and only if the corresponding induced $5$-cycles share at least one common edge.
\end{definition}

We call $\Cy5$ the \emph{pentagon graph operator}. If $G$ has no induced $5$-cycles, define $\Cy5(G)=\emptyset$. Also define $\Cy5^0(G)=G$.

The iterative sequence generated by $G$ is
\[
\langle \Cy5^k(G)\rangle_{k\ge0}=\langle G,\Cy5(G),\Cy5^2(G),\dots\rangle.
\]

\section{Examples and motivating structures}

\subsection*{Pentagon-vanishing graphs}
A graph whose induced pentagons form a $5$-cycle under edge intersection gives $\Cy5(G)\cong \Cy5$, then $\Cy5^2(G)\cong K_1$, and finally $\Cy5^3(G)=\emptyset$. Thus pentagon-vanishing graphs arise naturally.

\begin{definition}
A graph $G$ is \emph{pentagon-vanishing} if $\Cy5^k(G)=\emptyset$ for some positive integer $k$.
\end{definition}

\subsection*{The dodecahedron and icosahedron}
Let $D$ denote the graph of the dodecahedron and $I$ the graph of the icosahedron.

A key feature of the pentagon operator is that the faces of the dodecahedron are induced pentagons, and their edge intersections reproduce the adjacency structure of the icosahedron.

\begin{proposition}
The pentagon graph of the dodecahedron is isomorphic to the icosahedron:
\[
\Cy5(D)\cong I.
\]
Moreover,
\[
\Cy5(I)\cong I.
\]
Hence the dodecahedron is eventually pentagon-periodic.
\end{proposition}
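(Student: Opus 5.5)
The plan is a direct combinatorial verification in two parts. In each part we first identify exactly which induced $5$-cycles occur, and then we determine when two of them share an edge.

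\textbf{The dodecahedron.} Since $D$ has girth $5$, every $5$-cycle of $D$ is automatically induced. I would show that the $5$-cycles of $D$ are exactly its $12$ pentagonal faces.

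1. Suppose a $5$-cycle $C$ is not a face. Then some vertex $x$ of $C$ uses two edges of $C$ that do not bound a common face at $x$.
2. $D$ is cubic, and the three faces at $x$ pairwise share an edge. So any two edges at $x$ lie on a common face $F$.
3. Follow $C$ from $x$ along $F$. Because the girth is $5$, a short check shows the cycle is forced to close up along $F$, giving $C=F$.

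An alternative for this step is to cite the known count of $12$ pentagons in $D$.

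Once the faces are identified, two faces share an edge exactly when they are adjacent in the planar dual. The dual of the dodecahedron is the icosahedron, so $\Cy5(D)\cong I$.

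\textbf{The icosahedron, step one: the induced $5$-cycles.} For each vertex $v$ of $I$, the neighbourhood $N(v)$ induces a $5$-cycle $L(v)$, called the link of $v$. It is induced because two non-consecutive neighbours of $v$ are at distance $2$. The main claim is that these $12$ links are the only induced $5$-cycles of $I$. This is the step I expect to be the main obstacle.

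1. Let $C=x\,a\,c\,d\,b$ be an induced $5$-cycle.
2. The vertices $a,b$ lie in the pentagon $L(x)$ and are non-adjacent. Hence in $L(x)$ they have a unique common neighbour $y$.
3. Two vertices of $I$ at distance $2$ have exactly two common neighbours, here $x$ and $y$.
4. Use the distance structure of $I$: one vertex, $5$ neighbours, $5$ vertices at distance $2$, and one antipode. With this, a short case analysis on the positions of $c$ and $d$ should show that $y$ is adjacent to $c$ and $d$. It would then follow that $C=L(y)$.

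If this case analysis becomes messy, I would fall back to a counting argument. Count the induced $5$-cycles through a fixed path $a\,x\,b$ with $a\not\sim b$, and use the symmetry of $I$.

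\textbf{The icosahedron, step two: adjacency in $\Cy5(I)$.}

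1. Adjacent vertices $u\sim v$ have two common neighbours $w_1,w_2$. These are non-adjacent, since otherwise $I$ would contain a $K_4$. So $L(u)\cap L(v)$ contains no edge.
2. Suppose $L(u)$ and $L(v)$ share an edge $pq$. Then $puq$ and $pvq$ are the two triangles on the edge $pq$, so $u$ and $v$ are at distance $2$.
3. Conversely, each of the $30$ edges of $I$ lies in exactly two triangles. This yields a pair at distance $2$, and the correspondence between edges and such pairs is injective.
4. There are exactly $12\cdot5/2=30$ pairs at distance $2$. So two links share an edge if and only if their centres are at distance $2$.

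Hence $\Cy5(I)$ is the distance-$2$ graph of $I$.

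\textbf{The icosahedron, step three: the distance-$2$ graph.} It remains to show that this distance-$2$ graph is isomorphic to $I$. I would realise $I$ with vertices at the cyclic permutations of $(0,\pm1,\pm\varphi)$, where $\varphi$ is the golden ratio. Applying the Galois conjugation $\varphi\mapsto-1/\varphi$ to the coordinates produces the great icosahedron. This map interchanges the edge-distance $2$ with the icosahedral edge-distance, and so gives the isomorphism $\Cy5(I)\cong I$.

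As a combinatorial alternative, I would check the isomorphism directly on a labelled copy of $I$, using antipodal pairs $\{v,\bar v\}$.

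\textbf{Conclusion.} Combining the two parts gives $\Cy5^k(D)\cong I$ for all $k\ge1$, so $D$ is eventually pentagon-periodic.
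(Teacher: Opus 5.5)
Your proposal is correct. For $\Cy5(D)\cong I$ you argue as the paper does, matching faces of $D$ with vertices of the dual. For $\Cy5(I)\cong I$ your route is genuinely different and more careful.

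The paper only asserts that $I$ has exactly $12$ induced pentagons ``with the same adjacency structure'', and states that $x\mapsto[N_I(x)]$ is an isomorphism. Your step two shows that this literal map is \emph{not} an isomorphism. For $u\sim v$ the links meet in two non-adjacent vertices and share no edge. In the paper's labelling, $L(a)=[1,2,3,4,5]$ and $L(1)=[a,2,7,6,5]$ share no edge although $a\sim 1$. Links of centres at distance $2$, by contrast, share exactly one edge. So $x\mapsto L(x)$ is an isomorphism from the distance-$2$ graph of $I$ onto $\Cy5(I)$. One then genuinely needs the extra fact that this distance-$2$ graph is again isomorphic to $I$. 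Your Galois-conjugation step supplies exactly this: the conjugate point set is again an icosahedron, and its shortest distances are the images of the old distance-$2$ pairs. The paper uses this only implicitly: its figure places $L(6)$ at the position of vertex $1$, $L(8)$ at that of vertex $2$, and so on.

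Your step one also proves the pentagon count that the paper merely asserts. The case analysis you left open closes in two lines. The vertices $c$ and $d$ must be neighbours of $a$ and $b$ at distance $2$ from $x$, and the only edge between these two pairs joins the two such neighbours of $y$. The paper's version buys brevity and an explicit labelled picture. Yours buys a checkable argument with the correct intermediate identification.

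The one weak spot is your sketch that every $5$-cycle of $D$ is a face, a point the paper also leaves implicit. Your step 1 asserts something that your step 2 immediately refutes: in a cubic plane graph, any two edges at a vertex bound a common face. Step 3 also cannot rest on girth $5$ alone. The Petersen graph is cubic of girth $5$, yet each of its $2$-paths lies on two $5$-cycles. What closes the argument is a property specific to $D$. For example, each vertex at distance $2$ from $x$ has exactly one neighbour at distance $2$ from $x$ (intersection number $a_2=1$). Hence the only edge from $N(a)\setminus\{x\}$ to $N(b)\setminus\{x\}$ is the one on the face through the $2$-path $a\,x\,b$ of $C$. Alternatively, cite the count of twelve $5$-cycles, as you propose.
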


\begin{proof}
The $12$ pentagonal faces of the dodecahedron correspond bijectively to the $12$ vertices of the icosahedron, and two faces share an edge precisely when the corresponding vertices of the dual polyhedron are adjacent. Thus $\Cy5(D)\cong I$.  See Figure \ref{fig:Dodeca-Icosa}.

\begin{figure}[h!]
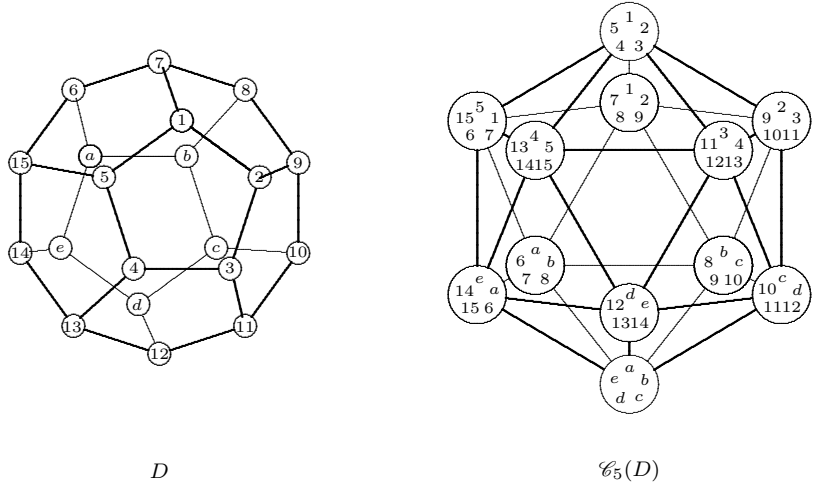

$$\pic
\scriptsize
\SetUnits[cm] (1,1,1)
\VertexRadius4.3pt
\SimpleVertex{black}
\PlotSize2
\Align[c] ($D$) (0,-3.5)
\Edge(1.15,1.56) (0.36,0.7)
\Edge(0.77,-0.53) (1.84,-0.6)
\Edge(-0.278,-1.3) (0,-1.926)
\Edge(-1.32,-0.54) (-1.87,-0.6)
\Edge(-0.92,0.69) (-1.14,1.6)
\PlotSize4
\Edge(1.2,0.34) (1.86,0.62)
\Edge(0.96,-0.82) (1.14,-1.6)
\Edge(-0.34,-0.82) (-1.16,-1.56)
\Edge(-0.73,0.4) (-1.83,0.6)
\Edge(0,2) (0.3,1.14)
\Rotate[18] (0,0)
\Cycle(10) [1.2cm] (0,0)
\offrotate
\Cycle(5) [1.28cm] (0.3,0.072)
\PlotSize2
\Rotate[36] (-0.1,-0.1)
\Cycle(5) [1.28cm] (-0.3,-0.072)
\offrotate
\Align[c] (\tiny 1) (0.3,1.17)
\Align[c] (\tiny 2) (1.33,0.41)
\Align[c] (\tiny 3) (0.94,-0.8)
\Align[c] (\tiny 4) (-0.34,-0.8)
\Align[c] (\tiny 5) (-0.73,0.41)
\Align[c] (\tiny$a$) (-0.92,0.68)
\Align[c] (\tiny$b$) (0.37,0.7)
\Align[c] (\tiny$c$) (0.75,-0.54)
\Align[c] (\tiny$d$) (-0.3,-1.28)
\Align[c] (\tiny$e$) (-1.32,-0.54)
\Align[c] (\tiny6) (-1.14,1.57)
\Align[c] (\tiny7) (0,1.93)
\Align[c] (\tiny8) (1.15,1.57)
\Align[c] (\tiny9) (1.84,0.6)
\Align[c] (\tiny10) (1.84,-0.59)
\Align[c] (\tiny11) (1.14,-1.58)
\Align[c] (\tiny12) (0,-1.94)
\Align[c] (\tiny13) (-1.16,-1.58)
\Align[c] (\tiny14) (-1.85,-0.6)
\Align[c] (\tiny15) (-1.85,0.6)

\Translate(0,0) (2.5,0)
\SetUnits[cm] (2.5,2.5,2.5)
\Align[c] ($\Cy5(D)$) (0,-1.4)
\ViewPoint(6,0,1)
\VertexRadius11pt
\SimpleVertex{black}
\PlotSize2
\PATH (-0.85,0,0.425) (-0.263,0.809,0.425) (0.688,0.5,0.425) (0.688,-0.5,0.425) (-0.263,-0.809,0.425)
\EDGE(-0.263,-0.809,0.425) (-0.85,0,0.425)
\PlotSize4
\EDGE(-0.263,-0.809,0.425) (0.688,-0.5,0.425)
(0.688,-0.5,0.425) (0.688,0.5,0.425)
(0.688,0.5,0.425) (-0.263,0.809,0.425)
\PlotSize2
\PATH(0.85,0,-0.425) (0.263,-0.809, -0.425) (-0.688,-0.5,-0.425) (-0.688,0.5,-0.425) (0.263,0.809,-0.425)
\EDGE(0.263,0.809,-0.425) (0.85,0,-0.425)
\PlotSize4
\EDGE(0.263,-0.809,-0.425) (0.85,0,-0.425)
(0.85,0,-0.425) (0.263,0.809,-0.425)
\PlotSize2
\VERTEX(0,0,0.951)
\EDGE(0,0,0.951) (-0.85,0,0.425) (0,0,0.951) (-0.263,0.809,0.425) (0,0,0.951) (0.688,0.5,0.425) (0,0,0.951) (0.688,-0.5,0.425) (0,0,0.951) (-0.263,-0.809,0.425)
\PlotSize4
\EDGE(0,0,0.951) (-0.263,-0.809,0.425)
(0,0,0.951) (0.688,-0.5,0.425)
(0,0,0.951) (0.688,0.5,0.425)
(0,0,0.951) (-0.263,0.809,0.425)
\PlotSize2
\VERTEX(0,0,-0.951)
\EDGE(0,0,-0.951) (0.85,0,-0.425) (0,0,-0.951) (0.263,-0.809,-0.425) (0,0,-0.951) (-0.688,-0.5,-0.425) (0,0,-0.951) (-0.688, 0.5,-0.425) (0,0,-0.951) (0.263,0.809,-0.425)
\PlotSize4
\EDGE(0,0,-0.951) (0.263,-0.809,-0.425) 
(0,0,-0.951) (0.85,0,-0.425)
(0,0,-0.951) (0.263,0.809,-0.425)
\PlotSize2
\PATH(-0.688,0.5,-0.425)  (-0.85,0,0.425)
 (-0.688,-0.5,-0.425)  (-0.263,-0.809,0.425) (0.263,-0.809,-0.425)  (0.688,-0.5,0.425) (0.85,0,-0.425) (0.688,0.5,0.425) (0.263,0.809,-0.425) (-0.263,0.809,0.425) (-0.688,0.5,-0.425)
\PlotSize4
\EDGE(-0.263,-0.809,0.425) (0.263,-0.809,-0.425)
(0.263,-0.809,-0.425) (0.688,-0.5,0.425)
(0.688,-0.5,0.425) (0.85,0,-0.425)
(0.85,0,-0.425) (0.688,0.5,0.425)
(0.688,0.5,0.425) (0.263,0.809,-0.425)
(0.263,0.809,-0.425) (-0.263,0.809,0.425)
\PlotSize2
\VERTEX(-0.85,0,0.425) (-0.688,0.5,-0.425)  (-0.688,-0.5,-0.425)
\ALIGN[c] (\myvertex12345) (0,0,0.951)
\ALIGN[c] (\myvertex abcde) (0,0,-0.951)
\ALIGN[c] (\myvertex bc{10}98) (-0.688,0.5,-0.425)
\ALIGN[c] (\myvertex12987)  (-0.85,0,0.425)
\ALIGN[c] (\myvertex ab876) (-0.688,-0.5,-0.425)
\ALIGN[c] (\myvertex5176{15}) (-0.263,-0.809,0.425)
\ALIGN[c] (\myvertex ea6{15}{14}) (0.263,-0.809,-0.425)
\ALIGN[c] (\myvertex45{15}{14}{13}) (0.688,-0.52,0.425)
\ALIGN[c] (\myvertex de{14}{13}{12}) (0.6,-0.01,-0.44)
\ALIGN[c] (\myvertex34{13}{12}{11}) (0.6,0.49,0.425)
\ALIGN[c] (\myvertex cd{12}{11}{10}) (0.263,0.8,-0.418)
\ALIGN[c] (\myvertex23{11}{10}9) (-0.263,0.809,0.425)
\cip$$
\caption{The pentagon graph of the dodecahedron is the icosahedron}
\label{fig:Dodeca-Icosa}
\end{figure}

The icosahedron itself contains exactly $12$ induced pentagons arranged with the same adjacency structure, yielding $\Cy5(I)\cong I$.  See Figure \ref{fig:Icosa-Icosa}.  The mapping $x\mapsto [N_I(x)]$, the 5-cycle induced by the neighbors of $x$, is an isomorphism.

\begin{figure}[h!]
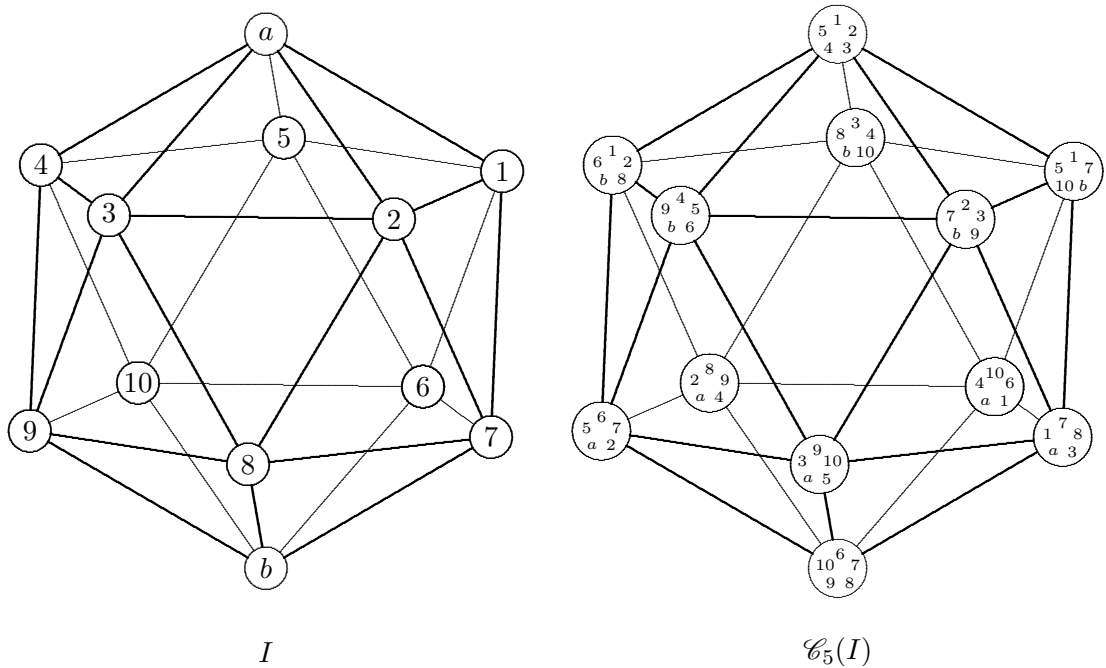

$$\pic
\SetUnits[cm] (3.8,3.8,3.8)
\ViewPoint(27,2,5)
\ALIGN[c] ($I$) (0,0,-1.25)
\VertexRadius8pt
\PlotSize2
\PATH (-0.85,0,0.425) (-0.263,0.809,0.425) (0.688,0.5,0.425) (0.688,-0.5,0.425) (-0.263,-0.809,0.425)
\EDGE(-0.263,-0.809,0.425) (-0.85,0,0.425)
\PlotSize4
\EDGE(-0.263,-0.809,0.425) (0.688,-0.5,0.425)
(0.688,-0.5,0.425) (0.688,0.5,0.425)
(0.688,0.5,0.425) (-0.263,0.809,0.425)
\ALIGN[c] (5) (-0.85,0,0.425)
\ALIGN[c] (1) (-0.263,0.809,0.425)
\ALIGN[c] (2) (0.688,0.5,0.425)
\ALIGN[c] (3) (0.688,-0.5,0.435)
\ALIGN[c] (4) (-0.263,-0.809,0.425)
\PlotSize2
\PATH(0.85,0,-0.425) (0.263,-0.809, -0.425) (-0.688,-0.5,-0.425) (-0.688,0.5,-0.425) (0.263,0.809,-0.425)
\EDGE(0.263,0.809,-0.425) (0.85,0,-0.425)
\PlotSize4
\EDGE(0.263,-0.809,-0.425) (0.85,0,-0.425)
(0.85,0,-0.425) (0.263,0.809,-0.425)
\ALIGN[c] (8) (0.85,0,-0.425)
\ALIGN[c] (9) (0.263,-0.809,-0.425)
\ALIGN[c] (10) (-0.688,-0.5,-0.425)
\ALIGN[c] (6) (-0.688,0.5,-0.425)
\ALIGN[c] (7) (0.263,0.809,-0.425)
\PlotSize2
\VERTEX(0,0,0.951)
\EDGE(0,0,0.951) (-0.85,0,0.425) (0,0,0.951) (-0.263,0.809,0.425) (0,0,0.951) (0.688,0.5,0.425) (0,0,0.951) (0.688,-0.5,0.425) (0,0,0.951) (-0.263,-0.809,0.425)
\PlotSize4
\EDGE(0,0,0.951) (-0.263,-0.809,0.425)
(0,0,0.951) (0.688,-0.5,0.425)
(0,0,0.951) (0.688,0.5,0.425)
(0,0,0.951) (-0.263,0.809,0.425)
\PlotSize2
\VERTEX(0,0,-0.951)
\EDGE(0,0,-0.951) (0.85,0,-0.425) (0,0,-0.951) (0.263,-0.809,-0.425) (0,0,-0.951) (-0.688,-0.5,-0.425) (0,0,-0.951) (-0.688, 0.5,-0.425) (0,0,-0.951) (0.263,0.809,-0.425)
\PlotSize4
\EDGE(0,0,-0.951) (0.263,-0.809,-0.425) 
(0,0,-0.951) (0.85,0,-0.425)
(0,0,-0.951) (0.263,0.809,-0.425)
\PlotSize2
\PATH(-0.688,0.5,-0.425)  (-0.85,0,0.425)
 (-0.688,-0.5,-0.425)  (-0.263,-0.809,0.425) (0.263,-0.809,-0.425)  (0.688,-0.5,0.425) (0.85,0,-0.425) (0.688,0.5,0.425) (0.263,0.809,-0.425) (-0.263,0.809,0.425) (-0.688,0.5,-0.425)
\PlotSize4
\EDGE(-0.263,-0.809,0.425) (0.263,-0.809,-0.425)
(0.263,-0.809,-0.425) (0.688,-0.5,0.425)
(0.688,-0.5,0.425) (0.85,0,-0.425)
(0.85,0,-0.425) (0.688,0.5,0.425)
(0.688,0.5,0.425) (0.263,0.809,-0.425)
(0.263,0.809,-0.425) (-0.263,0.809,0.425)
\PlotSize2
\ALIGN[c] ($a$) (0,0,0.951)
\ALIGN[c] ($b$) (0,0,-0.951)
\Translate(0,0) (2,0)
\ALIGN[c] ($\Cy5(I)$) (0,0,-1.25) 
\VertexRadius11pt
\SimpleVertex{black}
\PlotSize2
\PATH (-0.85,0,0.425) (-0.263,0.809,0.425) (0.688,0.5,0.425) (0.688,-0.5,0.425) (-0.263,-0.809,0.425)
\EDGE(-0.263,-0.809,0.425) (-0.85,0,0.425)
\PlotSize4
\EDGE(-0.263,-0.809,0.425) (0.688,-0.5,0.425)
(0.688,-0.5,0.425) (0.688,0.5,0.425)
(0.688,0.5,0.425) (-0.263,0.809,0.425)
\ALIGN[c] (\myvertex17b{10}5) (-0.263,0.809,0.425)
\ALIGN[c] (\myvertex239b7) (0.688,0.5,0.425)
\ALIGN[c] (\myvertex456b9) (0.688,-0.5,0.435)
\ALIGN[c] (\myvertex128b6) (-0.263,-0.809,0.425)
\ALIGN[c] (\myvertex34{10}b8) (-0.85,0,0.425)
\PlotSize2
\PATH(0.85,0,-0.425) (0.263,-0.809, -0.425) (-0.688,-0.5,-0.425) (-0.688,0.5,-0.425) (0.263,0.809,-0.425)
\EDGE(0.263,0.809,-0.425) (0.85,0,-0.425)
\PlotSize4
\EDGE(0.263,-0.809,-0.425) (0.85,0,-0.425)
(0.85,0,-0.425) (0.263,0.809,-0.425)
\ALIGN[c] (\myvertex9{10}5a3) (0.85,0,-0.425)
\ALIGN[c] (\myvertex672a5) (0.263,-0.809,-0.425)
\ALIGN[c] (\myvertex894a2) (-0.688,-0.5,-0.425)
\ALIGN[c] (\myvertex{10}61a4) (-0.688,0.5,-0.425)
\ALIGN[c] (\myvertex783a1) (0.263,0.809,-0.425)
\PlotSize2
\VERTEX(0,0,0.951)
\EDGE(0,0,0.951) (-0.85,0,0.425) (0,0,0.951) (-0.263,0.809,0.425) (0,0,0.951) (0.688,0.5,0.425) (0,0,0.951) (0.688,-0.5,0.425) (0,0,0.951) (-0.263,-0.809,0.425)
\PlotSize4
\EDGE(0,0,0.951) (-0.263,-0.809,0.425)
(0,0,0.951) (0.688,-0.5,0.425)
(0,0,0.951) (0.688,0.5,0.425)
(0,0,0.951) (-0.263,0.809,0.425)
\PlotSize2
\VERTEX(0,0,-0.951)
\EDGE(0,0,-0.951) (0.85,0,-0.425) (0,0,-0.951) (0.263,-0.809,-0.425) (0,0,-0.951) (-0.688,-0.5,-0.425) (0,0,-0.951) (-0.688, 0.5,-0.425) (0,0,-0.951) (0.263,0.809,-0.425)
\PlotSize4
\EDGE(0,0,-0.951) (0.263,-0.809,-0.425) 
(0,0,-0.951) (0.85,0,-0.425)
(0,0,-0.951) (0.263,0.809,-0.425)
\PlotSize2
\PATH(-0.688,0.5,-0.425)  (-0.85,0,0.425)
 (-0.688,-0.5,-0.425)  (-0.263,-0.809,0.425) (0.263,-0.809,-0.425)  (0.688,-0.5,0.425) (0.85,0,-0.425) (0.688,0.5,0.425) (0.263,0.809,-0.425) (-0.263,0.809,0.425) (-0.688,0.5,-0.425)
\PlotSize4
\EDGE(-0.263,-0.809,0.425) (0.263,-0.809,-0.425)
(0.263,-0.809,-0.425) (0.688,-0.5,0.425)
(0.688,-0.5,0.425) (0.85,0,-0.425)
(0.85,0,-0.425) (0.688,0.5,0.425)
(0.688,0.5,0.425) (0.263,0.809,-0.425)
(0.263,0.809,-0.425) (-0.263,0.809,0.425)
\PlotSize2
\ALIGN[c] (\myvertex12345) (0,0,0.951)
\ALIGN[c] (\myvertex6789{10}) (0,0,-0.951)
\cip$$
\caption{The pentagon graph of $I$ is isomorphic to $I$}
\label{fig:Icosa-Icosa}
\end{figure}
\end{proof}

\begin{definition}
A graph $G$ is \emph{pentagon-periodic} if there exist integers $m\ge0$ and $p>0$ such that
\[
\Cy5^{n+p}(G)\cong \Cy5^n(G)\quad\text{for all }n\ge m.
\]
\end{definition}

\subsection*{Pentagon-expanding graphs}
We now construct a family of expanding graphs.

Let us refer to the drawing of $I$ in Figure \ref{fig:Icosa-Icosa}.  One property of the graph $I$ is that given any vertex $x$ in the graph, there is a unique vertex $y$ in the graph such that $d(x,y)=3$, \emph{i.e.}, the length of any shortest path joining $x$ and $y$ is 3.  In such a case, let us call $x$ and $y$ \emph{co-polar vertices} and $\{x, y\}$ is a pair of co-polar vertices. Since there are 12 vertices in $I$, there are exactly 6 co-polar pairs, namely, $\{a,b\}$, $\{1,9\}$, $\{2,10\}$, $\{3,6\}$, $\{4,7\}$, and $\{5,8\}$.

The graph $I$ is \emph{vertex-transitive}.  This means that given any two vertices $x, y$ in $I$, the map $x\mapsto y$ extends to an automorphism of the graph.  To see this, consider an embedding of $I$ in the Euclidean space $\mathbb R^3$ with unit edges.  Let $\{x,x'\}$ and $\{y,y'\}$ be co-polar pairs of vertices.  Orient $I$ in $\mathbb R^3$ such that $x$ is the north pole and $x'$ is the south pole.  Take another orientation of $I$ such that $y$ is the north pole and $y'$ is the south pole.  We then superimpose the two embeddings such that $x$ and $y$ go together, and $y$ and $y'$ go together.  This gives us an automorphism that maps $x$ to $y$.  In fact by rotating one copy of $I$ about the $xx'$-axis through a multiple  $72^\circ$, another automorphism is obtained.

An induced subgraph of $I$ which we will find useful is the graph~ \lower10pt\hbox{$\pic \Path(0,-10) (0,10) (20,0) (40,0) \Edge(0,-10) (20,0) \cip$} ~known as the \emph{tadpole graph}.  The symbol $T_{3,1}$ is used to denote this graph.  Now let $T$ be any triangle (cycle of length 3) in $I$.  If $x$ is any vertex in $T$, then there is a unique neighbor $u$ of $x$ in $I$ such that $\{u\}\cup V(T)$ induces a subgraph isomorphic to $T_{3,1}$.

Consider any induced $T_{3,1}$ in $I$ consisting of the triangle $xyzx$ and a vertex $u$ which is adjacent to $x$.  Then the co-polar vertex of $u$ is adjacent to both $y$ and $z$.  This is because $d(u,y)=d(u,z)=2$ and the distance between $u$ and its co-polar vertex is 3.  From this fact, it follows that $I$ is tadpole-transitive.  This means that given any two induced tadpoles in $I$, there exists an automorphism of $I$ that maps one tadpole to the other.

Consider the graph $I_1$ obtained from $I$ by adding a new vertex $v$ and making $v$ adjacent to the vertices of an induced tadpole in $I$.  We shall call $v$ a \emph{tadpole hat} of $I$.

\begin{figure}[h!]
$$\pic
\SetUnits[cm] (3.5,3.5,3.5)
\ViewPoint(27,2,5)
\ALIGN[c] ({(a) $I_1$}) (0,0,-1.25)
\VertexRadius8pt
\PlotSize2
\PATH (-0.85,0,0.425) (-0.263,0.809,0.425) (0.688,0.5,0.425) (0.688,-0.5,0.425) (-0.263,-0.809,0.425)
\EDGE(-0.263,-0.809,0.425) (-0.85,0,0.425)
\PlotSize4
\EDGE(-0.263,-0.809,0.425) (0.688,-0.5,0.425)
(0.688,-0.5,0.425) (0.688,0.5,0.425)
(0.688,0.5,0.425) (-0.263,0.809,0.425)
\ALIGN[c] (5) (-0.85,0,0.425)
\ALIGN[c] (1) (-0.263,0.809,0.425)
\ALIGN[c] (2) (0.688,0.5,0.425)
\ALIGN[c] (3) (0.688,-0.5,0.435)
\ALIGN[c] (4) (-0.263,-0.809,0.425)
\PlotSize2
\PATH(0.85,0,-0.425) (0.263,-0.809, -0.425) (-0.688,-0.5,-0.425) (-0.688,0.5,-0.425) (0.263,0.809,-0.425)
\EDGE(0.263,0.809,-0.425) (0.85,0,-0.425)
\PlotSize4
\EDGE(0.263,-0.809,-0.425) (0.85,0,-0.425)
(0.85,0,-0.425) (0.263,0.809,-0.425)
\ALIGN[c] (8) (0.85,0,-0.425)
\ALIGN[c] (9) (0.263,-0.809,-0.425)
\ALIGN[c] (10) (-0.688,-0.5,-0.425)
\ALIGN[c] (6) (-0.688,0.5,-0.425)
\ALIGN[c] (7) (0.263,0.809,-0.425)
\PlotSize2
\VERTEX(0,0,0.951)
\EDGE(0,0,0.951) (-0.85,0,0.425) (0,0,0.951) (-0.263,0.809,0.425) (0,0,0.951) (0.688,0.5,0.425) (0,0,0.951) (0.688,-0.5,0.425) (0,0,0.951) (-0.263,-0.809,0.425)
\PlotSize4
\EDGE(0,0,0.951) (-0.263,-0.809,0.425)
(0,0,0.951) (0.688,-0.5,0.425)
(0,0,0.951) (0.688,0.5,0.425)
(0,0,0.951) (-0.263,0.809,0.425)
\PlotSize2
\VERTEX(0,0,-0.951)
\EDGE(0,0,-0.951) (0.85,0,-0.425) (0,0,-0.951) (0.263,-0.809,-0.425) (0,0,-0.951) (-0.688,-0.5,-0.425) (0,0,-0.951) (-0.688, 0.5,-0.425) (0,0,-0.951) (0.263,0.809,-0.425)
\PlotSize4
\EDGE(0,0,-0.951) (0.263,-0.809,-0.425) 
(0,0,-0.951) (0.85,0,-0.425)
(0,0,-0.951) (0.263,0.809,-0.425)
\PlotSize2
\PATH(-0.688,0.5,-0.425)  (-0.85,0,0.425)
 (-0.688,-0.5,-0.425)  (-0.263,-0.809,0.425) (0.263,-0.809,-0.425)  (0.688,-0.5,0.425) (0.85,0,-0.425) (0.688,0.5,0.425) (0.263,0.809,-0.425) (-0.263,0.809,0.425) (-0.688,0.5,-0.425)
\PlotSize4
\EDGE(-0.263,-0.809,0.425) (0.263,-0.809,-0.425)
(0.263,-0.809,-0.425) (0.688,-0.5,0.425)
(0.688,-0.5,0.425) (0.85,0,-0.425)
(0.85,0,-0.425) (0.688,0.5,0.425)
(0.688,0.5,0.425) (0.263,0.809,-0.425)
(0.263,0.809,-0.425) (-0.263,0.809,0.425)
\PlotSize2
\ALIGN[c] ($a$) (0,0,0.951)
\ALIGN[c] ($b$) (0,0,-0.951)
\VERTEX(0.4,-1,-0.6)
\ALIGN[c] ($v$) (0.4,-1,-0.6)
\ondashes
\EDGE(0.4,-1,-0.6)  (0.688,-0.5,0.435) (0.4,-1,-0.6) (-0.263,-0.809,0.425) (0.4,-1,-0.6) (0.263,-0.809,-0.425) (0.4,-1,-0.6) (0,0,-0.951)
\offdashes
\Translate(0,0) (2,0)
\ALIGN[c] ({(b) $\Cy5(I_1)$}) (0,0,-1.25) 
\VertexRadius11pt
\SimpleVertex{black}
\PlotSize2
\PATH (-0.85,0,0.425) (-0.263,0.809,0.425) (0.688,0.5,0.425) (0.688,-0.5,0.425) (-0.263,-0.809,0.425)
\EDGE(-0.263,-0.809,0.425) (-0.85,0,0.425)
\PlotSize4
\EDGE(-0.263,-0.809,0.425) (0.688,-0.5,0.425)
(0.688,-0.5,0.425) (0.688,0.5,0.425)
(0.688,0.5,0.425) (-0.263,0.809,0.425)
\ALIGN[c] (\myvertex17b{10}5) (-0.263,0.816,0.43)
\ALIGN[c] (\myvertex239b7) (0.688,0.5,0.425)
\ALIGN[c] (\myvertex456b9) (0.688,-0.5,0.435)
\ALIGN[c] (\myvertex128b6) (-0.263,-0.809,0.428)
\ALIGN[c] (\myvertex34{10}b8) (-0.85,0,0.436)
\PlotSize2
\PATH(0.85,0,-0.425) (0.263,-0.809, -0.425) (-0.688,-0.5,-0.425) (-0.688,0.5,-0.425) (0.263,0.809,-0.425)
\EDGE(0.263,0.809,-0.425) (0.85,0,-0.425)
\PlotSize4
\EDGE(0.263,-0.809,-0.425) (0.85,0,-0.425)
(0.85,0,-0.425) (0.263,0.809,-0.425)
\ALIGN[c] (\myvertex9{10}5a3) (0.263,0.809,-0.425) 
\ALIGN[c] (\myvertex672a5) (0.85,0,-0.425) 
\ALIGN[c] (\myvertex894a2) (0.263,-0.809,-0.425) 
\ALIGN[c] (\myvertex{10}61a4) (-0.688,-0.5,-0.425) 
\ALIGN[c] (\myvertex783a1) (-0.688,0.5,-0.425) 
\PlotSize2
\VERTEX(0,0,0.951)
\EDGE(0,0,0.951) (-0.85,0,0.425) (0,0,0.951) (-0.263,0.809,0.425) (0,0,0.951) (0.688,0.5,0.425) (0,0,0.951) (0.688,-0.5,0.425) (0,0,0.951) (-0.263,-0.809,0.425)
\PlotSize4
\EDGE(0,0,0.951) (-0.263,-0.809,0.425)
(0,0,0.951) (0.688,-0.5,0.425)
(0,0,0.951) (0.688,0.5,0.425)
(0,0,0.951) (-0.263,0.809,0.425)
\PlotSize2
\VERTEX(0,0,-0.951)
\EDGE(0,0,-0.951) (0.85,0,-0.425) (0,0,-0.951) (0.263,-0.809,-0.425) (0,0,-0.951) (-0.688,-0.5,-0.425) (0,0,-0.951) (-0.688, 0.5,-0.425) (0,0,-0.951) (0.263,0.809,-0.425)
\PlotSize4
\EDGE(0,0,-0.951) (0.263,-0.809,-0.425) 
(0,0,-0.951) (0.85,0,-0.425)
(0,0,-0.951) (0.263,0.809,-0.425)
\PlotSize2
\PATH(-0.688,0.5,-0.425)  (-0.85,0,0.425)
 (-0.688,-0.5,-0.425)  (-0.263,-0.809,0.425) (0.263,-0.809,-0.425)  (0.688,-0.5,0.425) (0.85,0,-0.425) (0.688,0.5,0.425) (0.263,0.809,-0.425) (-0.263,0.809,0.425) (-0.688,0.5,-0.425)
\PlotSize4
\EDGE(-0.263,-0.809,0.425) (0.263,-0.809,-0.425)
(0.263,-0.809,-0.425) (0.688,-0.5,0.425)
(0.688,-0.5,0.425) (0.85,0,-0.425)
(0.85,0,-0.425) (0.688,0.5,0.425)
(0.688,0.5,0.425) (0.263,0.809,-0.425)
(0.263,0.809,-0.425) (-0.263,0.809,0.425)
\PlotSize2
\ALIGN[c] (\myvertex12345) (0,0,0.951)
\ALIGN[c] (\myvertex6789{10}) (0,0,-0.951)
\SimpleVertex{black}
\VERTEX(0,-0.6,1.2) (0,0.6,1.2)
\ALIGN[c] ($p$) (0,-0.6,1.2)
\ondashes
\EDGE(0,-0.6,1.2) (0,0,0.951) (0,-0.6,1.2) (0.688,-0.5,0.435) (0,-0.6,1.2) (-0.263,-0.809,0.425) (0,-0.6,1.2) (0.85,0,-0.425)
\ALIGN[c] ($q$) (0,0.6,1.2)
\EDGE (0,0.6,1.2) (0,0,0.951) (0,0.6,1.2)  (0.85,0,-0.425) (0,0.6,1.2)  (0.688,0.5,0.425) (0,0.6,1.2) (-0.263,0.809,0.425)
\EDGE(0,-0.6,1.2) (0,0.6,1.2)
\cip$$
\caption{The graph $I_1$ and its pentagon graph $\Cy5(I_1)$}
\label{fig:hat}
\end{figure}

Referring to Figure \ref{fig:hat} (a), we see that the vertex $v$ lies in the induced cycle $p=[v,4,5,6,b,v]$ of length 5. Also, $v$ lies in the induced cycle $q=[v,3,2,7,b,v]$. In fact $v$ lies in exactly two induced cycles $p$, $q$ of lengths 5.  Therefore, $\Cy5(I_1)$ must contain $\Cy5(I)$ , the vertices $p$ and $q$, and the edges connecting them to their respective neighbors.  In particular $p$ and $q$ are adjacent in $\Cy5(I_1)$ and the edge $[p,q]$ lies in exactly one induced cycle of length 5. 

The neighbors of $p=[v,4,5,6,b,v]$ are $p_1=[1,2,3,4,5,1]$, $p_2=[1,2,8,b,6,1]$, $p_3=[4,5,6,b,9,4]$, and $p_4=[6,7,2,a,5,6]$.  These neighbors of $p$ induce a subgraph isomorphic to the tadpole $T_{3,1}$. Therefore, $p$ is a tadpole hat of $I$.  Likewise, the neighbors of $q$ are $q_1=[1,2,3,4,5,1]$, $q_2=[1,7,b,10,5,1]$, $q_3=[2,3,9,b,7,2]$, and $q_4=[6,7,2,a,5,2]$.  These four neighbors of $q$ form an induced subgraph of $I$ that is isomorphic to the tadpole $T_{3,1}$.  Therefore, $q$ is a tadpole hat of $I$.

To summarize, we have seen that $I_1$ is an icosahedron with one tadpole hat.  Now, $\Cy5(I_1)$ is an  icosahedron with two tadpole hats $p$ ad $q$ that are adjacent. Denote by $I_2$ the subgraph of $\Cy5(I_1)$ obtained by removing the edge $[p,q]$. Then $\Cy5(I_2)$ is an induced subgraph of $\Cy5^2(I_1)$. Furthermore, this graph contains a subgraph isomorphic to $I$ and this icosahedron has exactly 4 tadpole hats.  By mathematical induction, $\Cy5^k(I_1)$ contains as subgraph an icosahedron with $2^k$ tadpole hats.  Hence, $\ds\lim_{k\to\infty}|V(\Cy5^k(I_1))|=\infty$.

\begin{definition}
A graph $G$ is \emph{pentagon-expanding} if
\[
\lim_{k\to\infty}|V(\Cy5^k(G))|=\infty.
\]
\end{definition}

\begin{theorem}
For each positive integer $h$, an icosahedron with $h$ tadpole hats is pentagon-expanding.
\end{theorem}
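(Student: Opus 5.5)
We argue by induction on the iteration count, using the analysis of the single-hat graph $I_1$ carried out just before the statement. Let $G_0$ be an icosahedron $I$ with $h\ge1$ tadpole hats $v_1,\dots,v_h$, where each $v_i$ is joined to the four vertices of an induced $T_{3,1}$ in $I$. The goal is to show that $\Cy5^k(G_0)$ contains an icosahedron carrying at least $2^k$ tadpole hats. Then $|V(\Cy5^k(G_0))|\ge 12+2^k\to\infty$.

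\textbf{Step 1: a monotonicity principle.} Let $H$ be an induced subgraph of $G$. An induced $5$-cycle of $H$ is an induced $5$-cycle of $G$. Two such cycles share an edge in $H$ if and only if they do in $G$. Hence $\Cy5(H)$ is an induced subgraph of $\Cy5(G)$, and iterating, $\Cy5^k(H)$ is an induced subgraph of $\Cy5^k(G)$.

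Now $I$ together with the single hat $v_1$ is an induced subgraph of $G_0$, namely $G_0-\{v_2,\dots,v_h\}$, and it is isomorphic to $I_1$. So $\Cy5^k(I_1)$ embeds in $\Cy5^k(G_0)$. The discussion before the statement showed that $\Cy5^k(I_1)$ contains an icosahedron with $2^k$ tadpole hats, so the bound follows, and this already handles every $h$.

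One caution: the paper's inductive step passes to $I_2=\Cy5(I_1)-[p,q]$, which is only a spanning subgraph and not an induced one. The monotonicity above applies only to induced subgraphs, so I would repair that step as well.

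\textbf{Step 2: the core lemma, stated robustly.} Let $J$ be an induced subgraph of a graph $G$ with $J\cong I$, and let $v\in V(G)\setminus V(J)$ be a vertex whose neighbourhood in $J$ is an induced tadpole. Then $v$ lies in two induced $5$-cycles $p,q$ of $J+v$, and in $\Cy5(J+v)$ the twelve face-pentagons $[N_J(x)]$ form an icosahedron $J'$ (by the Proposition). Moreover, the neighbourhoods of $p$ and of $q$ in $J'$ are each induced tadpoles; this was computed explicitly before the statement, and by tadpole-transitivity of $I$ it holds for every tadpole.

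Iterating requires an icosahedron with $m$ hats and no extra edges interfering, so we take the hats to be pairwise nonadjacent or handle adjacency as follows. Take $2^{k}$ hats $w_1,\dots,w_{2^k}$ on an induced icosahedron $J$ in $\Cy5^k(G_0)$. Form the induced subgraph on $V(J)\cup\{w_1,\dots,w_{2^k}\}$. Pentagons through distinct hats are distinct vertices of the next iterate, since each contains its own hat vertex. Each $w_i$ yields two new hats $p_i,q_i$ over the same icosahedron $J'$, which gives $2^{k+1}$ distinct new vertices. Edges among the hats, or among the $p_i,q_i$, only create additional pentagons and adjacencies. Because we count vertices and only need the hats' neighbourhoods inside $J'$ to be correct, these extra edges do no harm.

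\textbf{Step 3: the main obstacle.} The delicate point is in Step 2: an edge between two hats $w_i,w_j$ could destroy the inducedness of a cycle such as $p=[v,4,5,6,b,v]$. It could also alter which pentagons contain a given hat. To avoid this, I would not rely on the full induced subgraph. Instead, for each hat $w_i$ separately, apply Step 1 to the induced subgraph $J+w_i\cong I_1$. That produces $p_i,q_i$ as genuine vertices of $\Cy5^{k+1}(G_0)$, together with their correct tadpole neighbourhoods in $J'$. Distinctness across $i$ holds because they are distinct $5$-cycles in the ambient graph. This gives at least $2^{k+1}$ hat vertices over $J'$, although they need not be pairwise nonadjacent.

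The remaining check is that the induction only ever uses "$J$ induced and $w$ a tadpole hat on $J$". Both properties are preserved: $J'$ is induced in the next iterate by Step 1, and the tadpole property of each new hat's neighbourhood in $J'$ is inherited likewise.
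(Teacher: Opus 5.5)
Your proof is correct and uses the same hat-doubling induction as the paper; the difference is in how the inductive step is justified.

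\textbf{The paper's step.} The proof of the theorem only asserts that each hat ``generates two hats.'' It relies on the preceding discussion, which passes through the spanning, non-induced subgraph $I_2=\Cy5(I_1)-[p,q]$. No monotonicity principle applies there, so the claimed embedding of $\Cy5(I_2)$ in $\Cy5^2(I_1)$ would need its own verification. The paper also never says how several hats interact.

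\textbf{Your step.} You prove that $\Cy5$ carries induced subgraphs to induced subgraphs, and you apply this to each $J+w_i\cong I_1$ separately. Since $J'=\Cy5(J)$ does not depend on $i$, each hat yields two vertices $p_i,q_i$ whose neighbourhoods in the same induced icosahedron $J'$ are induced tadpoles. These are distinct across $i$ because each contains its own $w_i$. This gives a clean invariant that is indifferent to adjacencies among hats, and it actually yields $h2^k$ hats, matching the paper's bound.

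\textbf{Two minor remarks.}
First, an edge $w_iw_j$ cannot spoil the inducedness of $p_i$, since that cycle contains only one hat. So that worry in Step~3 is unnecessary, though harmless.
Second, the tadpole computation you import from the paper is correct, but it must be checked against the true adjacency of $\Cy5(I)$: there, $[N_I(x)]$ and $[N_I(y)]$ share an edge exactly when $d(x,y)=2$. The paper's stated isomorphism $x\mapsto[N_I(x)]$ is not an isomorphism, since adjacent $x,y$ give links sharing no edge. Using that map would turn the neighbours $[N_I(a)],[N_I(1)],[N_I(7)],[N_I(10)]$ of $p$ into a path plus an isolated vertex rather than a tadpole.
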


\begin{proof}
Each tadpole hat generates two hats after one application of $\Cy5$. Hence after $k$ iterations the graph contains an induced icosahedron with at least $h2^k$ tadpole hats. Therefore
\[
|V(\Cy5^k(G))|\to\infty,
\]
so the graph is pentagon-expanding.
\end{proof}

\section{Main theorem}
The theorem that follows is a consequence of the fundamental theorem on graph operators \cite{fundamental} but we state and prove it specifically for the pentagon graph operator.

\begin{theorem}[Trichotomy theorem]
Every graph is exactly one of the following:
\begin{enumerate}[label=(\roman*)]
\item pentagon-vanishing,
\item pentagon-periodic,
\item pentagon-expanding.
\end{enumerate}
\label{thm:trichotomy}
\end{theorem}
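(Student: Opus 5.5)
The argument has two halves: show that at least one of the three behaviours occurs, and show that no two can occur together. Throughout we work with finite graphs and treat $\emptyset$ as a fixed point, $\Cy5(\emptyset)=\emptyset$.

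\emph{Existence.} Fix $G$ and look at the sequence of orders $n_k=|V(\Cy5^k(G))|$. We split on whether $\lim_{k\to\infty} n_k=\infty$.
\begin{enumerate}[label=(\alph*)]
\item If $n_k\to\infty$, then $G$ is pentagon-expanding by definition.
\item If not, the sequence $(n_k)$ is not eventually above every bound. So there is an $N$ with $n_k\le N$ for infinitely many $k$.
\end{enumerate}
In case (b), up to isomorphism there are only finitely many graphs on at most $N$ vertices, together with $\emptyset$. By the pigeonhole principle there are indices $m<m'$ with $\Cy5^{m}(G)\cong\Cy5^{m'}(G)$.

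Since $\Cy5$ respects isomorphism (isomorphic graphs have isomorphic pentagon graphs), induction on $j$ gives $\Cy5^{m+j}(G)\cong\Cy5^{m'+j}(G)$ for all $j\ge0$. Setting $p=m'-m>0$, we get $\Cy5^{n+p}(G)\cong\Cy5^n(G)$ for every $n\ge m$.

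Two sub-cases now arise. If $\Cy5^m(G)=\emptyset$, then $G$ is pentagon-vanishing. Otherwise every iterate from index $m$ onward is nonempty, since it is isomorphic to one of the finitely many nonempty graphs $\Cy5^{m},\dots,\Cy5^{m+p-1}$. In that case $G$ is pentagon-periodic.

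\emph{Exclusivity.} The main obstacle is that, read literally, the periodic definition also covers vanishing graphs: once $\emptyset$ appears, the sequence is constant. I would fix this by interpreting ``periodic'' as requiring the periodic tail to consist of nonempty graphs, that is, periodic and not vanishing. This matches the intended examples (the dodecahedron) and the contrast drawn in the statement. With this convention, vanishing and periodic are disjoint by definition.

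The remaining two pairs are handled as follows.
\begin{itemize}
\item A vanishing graph has $n_k=0$ from some point on, so it is not expanding.
\item A periodic graph has $n_k$ bounded by $\max_{m\le j<m+p} n_j$ for $k\ge m$, so $n_k\not\to\infty$ and it is not expanding.
\end{itemize}

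I would state the convention for $\emptyset$ explicitly at the start of the proof. The rest is routine; the only genuine content is the pigeonhole step combined with isomorphism-invariance of $\Cy5$.
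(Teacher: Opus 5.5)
Your proof is correct, and its core is the same as the paper's. Both use the fact that there are only finitely many isomorphism classes of bounded order, apply pigeonhole to get $\Cy5^{m}(G)\cong\Cy5^{m+p}(G)$, and use isomorphism-invariance of $\Cy5$ to extend this to all $n\ge m$.

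Where you differ is the case decomposition, and yours is the more careful one. The paper first removes the vanishing case, then splits on whether the orders are bounded, and declares the unbounded case expanding. But expanding is defined by $\lim_{k\to\infty}|V(\Cy5^k(G))|=\infty$, and an unbounded sequence need not tend to infinity. So that step silently needs the fact that $\liminf_k |V(\Cy5^k(G))|<\infty$ already forces eventual periodicity, and hence boundedness. Your split on whether $n_k\to\infty$, with pigeonhole applied only to the infinitely many indices where $n_k\le N$, is exactly that missing argument.

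Likewise, the paper simply asserts mutual exclusivity. Under the literal definitions, however, every vanishing graph is also periodic, with $p=1$, since $\emptyset$ is a fixed point. Your convention makes explicit what the paper uses implicitly when it reaches the periodic case only under ``assume this never happens''. Your boundedness checks also supply the other two exclusivity claims, which the paper omits.

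The only step to tighten is the nonemptiness of $\Cy5^{m+1}(G),\dots,\Cy5^{m+p-1}(G)$ and of the iterates before index $m$. It follows because $\emptyset$ is absorbing: an empty iterate at any index at most $m+p$ would force $\Cy5^{m+p}(G)$ to be empty, contradicting $\Cy5^{m+p}(G)\cong\Cy5^{m}(G)\neq\emptyset$.
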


\begin{proof}
Let $G$ be any graph.

If $\Cy5^k(G)=\emptyset$ for some $k>0$, then $G$ is pentagon-vanishing.

Assume this never happens. If the orders $|V(\Cy5^k(G))|$ are unbounded, then $G$ is pentagon-expanding.

Suppose instead that the orders are bounded by some integer $N$. Since there are only finitely many non-isomorphic graphs of order at most $N$, two iterates must be isomorphic. Let
\[
\Cy5^{m+p}(G)\cong \Cy5^m(G)
\]
with $p>0$ minimal. Then for every $n\ge m$,
\[
\Cy5^{n+p}(G)=\Cy5^{n-m}(\Cy5^{m+p}(G))\cong \Cy5^{n-m}(\Cy5^m(G))=\Cy5^n(G).
\]
Hence $G$ is pentagon-periodic.

The three cases are mutually exclusive, completing the proof.
\end{proof}

\section{Further directions}
The results of this paper suggest a systematic study of the operators $\Cy{k}$ for $k\ge 6$. A natural problem is to determine which values of $k$ admit periodic polyhedral fixed points analogous to the icosahedron in the pentagon case.

Another promising direction is to characterize graphs for which $\Cy{k}$ yields exponential growth under iteration. The pentagon graph operator suggests that local attachments to highly symmetric cycle-rich cores may provide a general mechanism for expansion.


\begin{thebibliography}{99}
\bibitem{Balakrishnan} R. Balakrishnan, \emph{Triangle graphs}, in: Graph connections (Cochin, 1998), p.~44, Allied Publ., New Delhi, 1999.
\bibitem{EgawaRamos} Y. Egawa, R.E. Ramos, \emph{Triangle graphs}, Math. Japon. 36 (1991) pp. 465--467.	
\bibitem{Egawa} Y. Egawa, M. Kano, E.L. Tan, \emph{On cycle graphs}, Ars Combin.  \textbf{32}(1991) pp. 97--113. \href{https://combinatorialpress.com/ars-articles/volume-032-ars-articles/on-cycle-graphs/}{https://combinatorialpress.com/ars-articles/volume-032-ars-articles/on-cycle-graphs/}
\bibitem{fundamental}  S.V. Gervacio, \emph{A fundamental theorem on graph operators},  Mathematics Open, Vol.~04, 2550010 (2025), \url{https://doi.org/10.1142/S2811007225500105}
\bibitem{quadrangle} S.V. Gervacio and Y.F. Lim, \emph{The quadrangle graph operator}, Mathematics Open, Vol. 05, 2550019 (2026) \url{https://doi.org/10.1142/S2811007225500191}
\bibitem{gervacio} S.V. Gervacio, \emph{Cycle graphs}. In K.~M.~Koh \& H.~P.~Yap (Eds.), Proceedings of the First Southeast Asian Graph Theory Colloquium, (1984) pp. 279--293. Singapore: Springer-Verlag. \href{https://link.springer.com/chapter/10.1007/BFb0073127}{https://link.springer.com/chapter/10.1007/BFb0073127}
\bibitem{Harary} F.~Harary, Graph Theory, Addison-Wesley, Reading Mass, (1969)
\bibitem{krausz}  J.~Krausz, \emph{D\'emonstration nouvelle d'un th\'eore\`me de Whitney sur les r\'eseaux}, Mat.~Fiz.~Lapok, 50: (1943) pp. 75--85
\bibitem{Pullman}N. Pullman,  \emph{Clique covering of graphs. IV. Algorithms}, SIAM J. Comput. 13 (1984) pp. 57--75. \href{https://doi.org/10.1137/0213005}{https://doi.org/10.1137/0213005}
\bibitem{vanRooij} A.C.M.~van Rooij and H.S.~Wilf, \emph{The interchange graph of a finite graph}, Acta Mathematica Hungarica, 16 (3--4): (1965) pp. 263--269, \href{https://doi.org/10.1007/BF01904834}{https://doi.org/10.1007/BF01904834}
\bibitem{seema} S. Varghese, S.V. Gervacio, A. Vijayakumar, \emph{On cycle graphs}, Asian-European Journal of Mathematics, 2023, \href{https://doi.org/10.1142/S1793557123500365}{https://doi.org/10.1142/S1793557123500365}
	
\end{thebibliography}
\end{document}